\documentclass[preprint,12pt]{elsarticle}

\usepackage{amsmath, amssymb, amsthm, mathtools}
\usepackage{enumitem}
\setlist[enumerate]{itemsep=2pt, topsep=4pt}
\usepackage{graphicx}
\usepackage{float}
\usepackage{xcolor}
\usepackage{tikz}
\usetikzlibrary{arrows.meta, positioning, calc}
\usepackage[margin=2.5cm]{geometry}
\usepackage{placeins}
\usepackage{microtype}
\usepackage{hyperref}

\theoremstyle{plain}
\newtheorem{theorem}{Theorem}[section]
\newtheorem{lemma}[theorem]{Lemma}
\newtheorem{proposition}[theorem]{Proposition}
\theoremstyle{definition}
\newtheorem{construction}[theorem]{Construction}

\numberwithin{equation}{section}
\numberwithin{figure}{section}

\newcommand{\one}{\mathbf{1}}

\newcommand{\Q}{\mathbb{Q}}
\newcommand{\R}{\mathbb{R}}
\newcommand{\Z}{\mathbb{Z}}
\newcommand{\calK}{\mathcal{K}}
\newcommand{\calG}{\mathcal{G}}
\newcommand{\sgn}{\sigma}
\newcommand{\Span}{\operatorname{span}}
\DeclareMathOperator{\rank}{rank}

\begin{document}

\begin{frontmatter}

\title{Graphs with zero as a main eigenvalue of the signless Laplacian}

\author[1]{Hangxi Cha}
\ead{2433949@tongji.edu.cn}
\author[1]{Haiying Shan\corref{cor1}}
\ead{shan_haiying@tongji.edu.cn}
\affiliation[1]{organization={School of Mathematical Sciences, Tongji University},
            city={Shanghai},
            postcode={200092},
            country={P.R. China}}
\cortext[cor1]{Corresponding author}

\begin{abstract}
An eigenvalue of the signless Laplacian \(Q(G)\) is \(Q\)-main if its
eigenspace is not orthogonal to the all-ones vector.  We characterize graphs
with exactly \(\ell\ge3\) \(Q\)-main eigenvalues, one of which is zero.  The
case \(\ell=3\) reduces to non-semiregular bipartite graphs satisfying a
vertexwise signed degree-sum identity.  For each integer \(k\ge0\), we
construct infinitely many pairwise nonisomorphic graphs of cyclomatic number
\(k\) and unbounded diameter, all with exactly three \(Q\)-main eigenvalues
including zero.  These families provide counterexamples to the stated
classifications of trees, unicyclic graphs, and bicyclic graphs of Javarsineh
and Fath-Tabar.
\end{abstract}

\begin{keyword}
signless Laplacian \sep main eigenvalue \sep bipartite graph \sep signed
degree-sum equation \sep cyclomatic number

\MSC[2020] 05C50 \sep 05C75 \sep 15A18
\end{keyword}

\end{frontmatter}

\section{Introduction}

Throughout the paper, graphs are finite, simple, connected, undirected, and
have at least one edge.  For a graph \(G\), let \(A(G)\) and \(D(G)\) denote
its adjacency and degree matrices.  The signless Laplacian is
\(Q(G)=A(G)+D(G)\).  The cyclomatic number of \(G\) is
\(|E(G)|-|V(G)|+1\).
An eigenvalue \(\lambda\) of \(Q(G)\) is \emph{\(Q\)-main} if the
\(\lambda\)-eigenspace is not orthogonal to the all-ones vector \(\one\).
We study graphs with exactly three \(Q\)-main eigenvalues, including the zero
eigenvalue.

Main eigenvalues for the adjacency matrix were introduced by
Cvetkovi\'c~\cite{Cvetkovic1970}.  Hagos proved that the number of main
eigenvalues equals the rank of the walk matrix~\cite{Hagos2002}; see also
Rowlinson's survey~\cite{Rowlinson2007}.  For the signless Laplacian, the
multiplicity of the zero eigenvalue equals the number of bipartite components
of the graph \cite{Cvetkovic2007,CvetkovicSimic2009,CvetkovicBook}; for the
connected graphs considered here it is \(0\) or \(1\).

Several structural results are known for \(Q\)-main eigenvalues.  Deng and
Huang proved that a graph has exactly one \(Q\)-main eigenvalue if and only if
it is regular~\cite{DengHuang2013}.  Chen and Huang proved that, for a graph
of order \(n\), the number of \(Q\)-main eigenvalues equals
\(\rank[\one,Q\one,\ldots,Q^{n-1}\one]\), and classified trees, unicyclic
graphs, and bicyclic graphs with exactly two \(Q\)-main
eigenvalues~\cite{ChenHuang2013}.  Li and Yang characterized the tricyclic
case with exactly two \(Q\)-main eigenvalues~\cite{LiYang2013Tricyclic}.
Vinagre, Trevisan, Bolckau, and Chimelli characterized threshold graphs with a
prescribed number of \(Q\)-main eigenvalues, and Jones, Trevisan, and Vinagre
characterized quasi-threshold graphs with two \(Q\)-main eigenvalues
\cite{VinagreTrevisanBolckauChimelli2020,JonesTrevisanVinagre2025}.

The zero eigenvalue is restrictive: it can be \(Q\)-main only for bipartite
graphs whose two bipartition classes have unequal sizes.  After making this
reduction, we give necessary and sufficient conditions for a graph to have
exactly \(\ell\) (\(\ell\ge3\)) \(Q\)-main eigenvalues, including zero.  For
\(\ell=3\), the criterion becomes a signed degree-sum equation together with
non-semiregularity.  We then construct, for every \(k\ge0\), infinitely many
pairwise nonisomorphic graphs with cyclomatic number \(k\), unbounded
diameter, and prescribed \(Q\)-main polynomial.

Applying this construction with \(k=0,1,2\) gives trees, unicyclic graphs, and
bicyclic graphs with exactly three \(Q\)-main eigenvalues, including the zero
eigenvalue, that are absent from the stated classifications of Javarsineh and
Fath-Tabar
\cite{JavarsinehFathTabar2017Filomat,JavarsinehFathTabar2017AMC}.  Hence those
classification theorems are not correct as complete classifications.

The paper is organized as follows.  Section~\ref{sec:preliminaries} collects
the linear-algebraic and signless-Laplacian preliminaries.
Section~\ref{sec:zero-q-main} gives the criterion after the reduction to
bipartite graphs with unequal classes, and derives its specialization to
exactly three \(Q\)-main eigenvalues.
Section~\ref{sec:families} gives the construction and its consequences for
cyclomatic numbers \(0,1,2\).  Section~\ref{sec:open-problems} discusses
possible extensions and open problems.

\section{Preliminaries}\label{sec:preliminaries}

Let \(M\) be a real symmetric matrix and \(z\) a nonzero vector.  An eigenvalue
\(\lambda\) of \(M\) is \emph{main with respect to \(z\)} (or \(z\)-main) if
the orthogonal projection of \(z\) onto \(\ker(M-\lambda I)\) is nonzero.
The Krylov space generated by \(z\) is
\(\calK(M, z)=\Span\{z, Mz, M^2z, \ldots\}\).  The minimal polynomial of \(M\) on
\(\calK(M, z)\) is the monic polynomial \(m_{M,z}\) of least degree such that
\(m_{M,z}(M)z=0\); equivalently, it is the minimal polynomial of the restriction
of \(M\) to the invariant subspace \(\calK(M, z)\).  For \(M=Q(G)\) and
\(z=\one\), the \(z\)-main eigenvalues are precisely the \(Q\)-main eigenvalues
of \(G\).  If these distinct eigenvalues are
\(\lambda_1,\ldots,\lambda_\ell\), set
\(\Phi_Q(G,x)=\prod_{i=1}^\ell(x-\lambda_i)\).
We call \(\Phi_Q(G,x)\) the \(Q\)-main polynomial of \(G\).  The next lemma
records standard facts about cyclic subspaces.  A proof is included for
completeness.  In particular, since
\(\calK(Q,\one)=\Span\{\one,Q\one,\ldots,Q^{n-1}\one\}\), part~(i) recovers the
walk-matrix rank formula of Chen and Huang: the number of distinct \(Q\)-main
eigenvalues equals \(\rank[\one,Q\one,\ldots,Q^{n-1}\one]\)~\cite{ChenHuang2013}.

\begin{lemma}[{\cite[Proposition~1]{GutknechtSchmelzer2009} for
parts~\textup{(i)}--\textup{(ii)}}] \label{lem:krylov}
Let \(M\) be a real symmetric matrix and let \(z\ne0\).  Suppose that \(M\) has
exactly \(\ell\) distinct \(z\)-main eigenvalues
\(\lambda_1,\ldots,\lambda_\ell\).
\begin{enumerate}[label=(\roman*)]
    \item \(\dim\calK(M,z)=\ell\) and
    \(m_{M,z}(x)=\prod_{i=1}^\ell (x-\lambda_i)\).
    \item A monic polynomial \(f\) satisfies \(f(M)z=0\) if and only if
    \(m_{M,z}\mid f\).
    \item For \(M=Q(G)\) and \(z=\one\), \(m_{Q(G),\one}(x)=\Phi_Q(G,x)\).
    \item If \(M\in\Z^{n\times n}\) and \(z\in\Z^n\), then \(m_{M,z}\in\Z[x]\).
\end{enumerate}
\end{lemma}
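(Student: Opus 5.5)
We would work from the spectral decomposition of \(M\). Since \(M\) is real symmetric, write \(M=\sum_{\mu}\mu P_\mu\), where \(\mu\) runs over the distinct eigenvalues and the \(P_\mu\) are the orthogonal projections onto the eigenspaces. Then \(z=\sum_\mu P_\mu z\), and by definition the nonzero terms are exactly those with \(\mu\in\{\lambda_1,\ldots,\lambda_\ell\}\). Set \(z_i=P_{\lambda_i}z\ne0\). For any polynomial \(f\),
\[
f(M)z=\sum_{i=1}^\ell f(\lambda_i)z_i .
\]
The \(z_i\) lie in mutually orthogonal eigenspaces, so they are linearly independent. Hence
\[
f(M)z=0 \iff f(\lambda_i)=0 \text{ for all } i \iff \prod_{i=1}^\ell (x-\lambda_i)\ \Big|\ f .
\]
This gives (ii) once (i) identifies \(m_{M,z}\) with this product. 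It also shows that the product is the monic annihilator of least degree, which is the minimal polynomial claim of (i).

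For the dimension in (i), every vector \(M^jz=\sum_i\lambda_i^jz_i\) lies in \(\Span\{z_1,\ldots,z_\ell\}\), so \(\dim\calK(M,z)\le\ell\). For the reverse inequality, the vectors \(z,Mz,\ldots,M^{\ell-1}z\) are expressed in the independent basis \((z_i)\) by a Vandermonde matrix \((\lambda_i^j)\). Since the \(\lambda_i\) are distinct, this matrix is nonsingular, so these \(\ell\) vectors are independent and \(\dim\calK(M,z)=\ell\). Alternatively, the degree of \(m_{M,z}\) equals \(\dim\calK\) by the usual cyclic-subspace argument: \(M^dz\) is the first power depending on earlier ones exactly when \(d=\deg m_{M,z}\).

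Part (iii) follows immediately. With \(M=Q(G)\) and \(z=\one\), the \(z\)-main eigenvalues are by definition the \(Q\)-main eigenvalues, so by (i) \(m_{Q(G),\one}=\prod(x-\lambda_i)=\Phi_Q(G,x)\).

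For (iv), let \(d=\ell\). The vectors \(z,Mz,\ldots,M^{d-1}z\) are independent integer vectors, and \(M^dz=\sum_{j<d}c_jM^jz\) with a unique coefficient vector \(c\). This is a linear system with integer data and a unique solution, so Cramer's rule on a nonsingular \(d\times d\) minor gives \(c_j\in\Q\). Thus \(m_{M,z}\in\Q[x]\) is monic. Its roots \(\lambda_i\) are eigenvalues of an integer matrix, hence roots of the monic integer characteristic polynomial, so they are algebraic integers. The coefficients of \(m_{M,z}\) are elementary symmetric functions of the \(\lambda_i\), hence algebraic integers that are also rational, and therefore integers. This integrality step is the only point needing care beyond linear algebra. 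An equivalent route is to note that \(m_{M,z}\) divides the characteristic polynomial in \(\Q[x]\) and apply Gauss's lemma to the monic factors.
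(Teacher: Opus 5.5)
Your proposal is correct and follows essentially the same route as the paper: you decompose $z$ into the nonzero eigencomponents, use a Vandermonde argument to get $\dim\calK(M,z)=\ell$, and prove rationality of the coefficients by linear algebra before an integrality step. The only minor differences are that you read off (ii) directly from $f(M)z=\sum_i f(\lambda_i)z_i$ instead of citing the general divisibility property of minimal polynomials, and you get integrality from algebraic integers where the paper uses Gauss's lemma (which you also mention as an alternative).
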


\begin{proof}
Let \(E_\lambda\) denote the orthogonal projection onto the
\(\lambda\)-eigenspace of \(M\).  The spectral decomposition gives
\[
        M^jz=\sum_{\lambda} \lambda^j E_\lambda z .
\]
Hence \(\calK(M,z)\) is spanned by the nonzero vectors
\(E_{\lambda_i}z\).  The nonsingularity of the corresponding Vandermonde
matrix shows that these vectors already lie in
\[
        \Span\{z,Mz,\ldots,M^{\ell-1}z\}.
\]
Thus \(\dim\calK(M,z)=\ell\), and the restriction of \(M\) to this space has
distinct eigenvalues \(\lambda_1,\ldots,\lambda_\ell\); its minimal polynomial
is therefore \(\prod_{i=1}^\ell(x-\lambda_i)\).  This proves~(i), and~(ii) is
the usual divisibility property of the minimal polynomial.  Statement~(iii) is
the special case \(M=Q(G)\), \(z=\one\), since \(\Phi_Q(G,x)\) is by definition
the product \(\prod_{i=1}^\ell(x-\lambda_i)\) over the \(Q\)-main eigenvalues.

It remains to prove~(iv).  Suppose that \(M\) and \(z\) are integral.  The
vectors \(z,Mz,\ldots,M^{\ell-1}z\) are then linearly independent over \(\Q\),
while \(M^{\ell}z\) is an \(\R\)-linear, hence a \(\Q\)-linear, combination of
them; so \(m_{M,z}\in\Q[x]\).  As a monic divisor of the monic integral
characteristic polynomial of \(M\), it lies in \(\Z[x]\) by Gauss's lemma.
\end{proof}

For a bipartite graph \(G\), we write an ordered bipartition as
\(V(G)=V_+\sqcup V_-\).  Since \(G\) is connected, this bipartition is unique
up to swapping the two classes; when they have unequal cardinalities and the
construction at hand prescribes no order, we label them so that
\(|V_+|>|V_-|\).  The inequality \(|V_+|>|V_-|\) is thus a labelling
convention, not a condition to be checked.  The sign
vector of this ordered bipartition is the vector
\(\sgn\in\{1,-1\}^{V(G)}\) given by
\[
        \sgn(v)=
        \begin{cases}
       1, & v\in V_+,\\
      -1, & v\in V_-.
        \end{cases}
\]

\begin{lemma} \label{lem:zero-main}
Let \(G\) be a graph.  Then \(0\) is a \(Q\)-eigenvalue of \(G\) if and only if
\(G\) is bipartite.  If \(G\) is bipartite, then for an ordered bipartition
\(V(G)=V_+\sqcup V_-\) with sign vector \(\sgn\),
\(\ker Q(G)=\operatorname{span}\{\sgn\}\), and the zero eigenvalue is \(Q\)-main
if and only if \(|V_+|\ne |V_-|\).
\end{lemma}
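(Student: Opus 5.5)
The plan is to use the quadratic form of the signless Laplacian,
\[
x^{\top}Q(G)x=\sum_{uv\in E(G)}(x_u+x_v)^2 ,
\]
which shows that \(Q(G)\) is positive semidefinite. It also shows that \(Q(G)x=0\) if and only if \(x^{\top}Q(G)x=0\), that is, if and only if \(x_u=-x_v\) for every edge \(uv\).

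For the first assertion, suppose \(x\ne0\) lies in \(\ker Q(G)\). Since \(G\) is connected, the relation \(x_u=-x_v\) along edges forces \(|x|\) to be constant, and this constant is nonzero. Going around any cycle, the sign of \(x\) alternates at each step. A cycle of odd length would therefore force \(x_u=-x_u\), hence \(x=0\), a contradiction. So \(G\) has no odd cycle and is bipartite. Conversely, if \(G\) is bipartite, the sign vector \(\sgn\) of an ordered bipartition satisfies \(\sgn(u)=-\sgn(v)\) on every edge, so \(\sgn\in\ker Q(G)\) and \(0\) is an eigenvalue.

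For the kernel, the same propagation argument applies. If \(x\in\ker Q(G)\), connectivity gives \(x=x_{v_0}\sgn\) for any fixed vertex \(v_0\in V_+\). Hence \(\ker Q(G)=\Span\{\sgn\}\).

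For the main-ness criterion, recall that the orthogonal projection of \(\one\) onto the one-dimensional space \(\ker Q(G)\) is
\[
\frac{\langle \one,\sgn\rangle}{\langle\sgn,\sgn\rangle}\,\sgn .
\]
Since \(\langle\one,\sgn\rangle=|V_+|-|V_-|\), this projection is nonzero exactly when \(|V_+|\ne|V_-|\).

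None of these steps is a real obstacle. The only point needing care is the propagation argument, which relies on connectivity: it gives both the uniqueness of the kernel direction and the odd-cycle contradiction.
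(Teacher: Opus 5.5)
Your proof is correct and is essentially the paper's argument. Your quadratic form is \(\|B^{T}x\|^{2}\), where \(B\) is the vertex--edge incidence matrix with \(Q(G)=BB^{T}\), so your edge condition \(x_u=-x_v\) is exactly the paper's identification \(\ker Q(G)=\ker B^{T}\). The one difference is that the paper cites the fact that \(\rank B\) is \(n-1\) or \(n\) according as the connected graph \(G\) is bipartite or not, whereas your propagation argument proves this directly and self-containedly, giving both the odd-cycle obstruction and \(\ker Q(G)=\Span\{\sgn\}\).
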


\begin{proof}
Let \(B\) be the \(0\)-\(1\) vertex-edge incidence matrix of \(G\), so that
\(Q(G)=BB^T\).  Recall that \(G\) is connected; for a connected graph on
\(n\) vertices, \(\rank B=n-1\) if \(G\) is bipartite and \(\rank B=n\)
otherwise \cite[Lemma~2.17]{Bapat2010}.  Since \(\ker Q(G)=\ker B^T\), the
value \(0\) is a \(Q\)-eigenvalue precisely in the bipartite case, and its
multiplicity is then one.  When \(G\) is bipartite, \(B^T\sgn=0\), so this
one-dimensional kernel is \(\operatorname{span}\{\sgn\}\).  The projection of
\(\one\) onto \(\ker Q\) is nonzero exactly when
\(\langle \one,\sgn\rangle=|V_+|-|V_-|\ne 0\).
\end{proof}

For a graph \(G\), write \(A=A(G)\) and put \(d=A\one\) and \(s=Ad=A^2\one\),
so that \(d(v)=\deg_G(v)\) and \(s(v)=\sum_{u\sim v}d(u)\); the latter is the
\emph{2-degree} of \(v\).
If \(Q=Q(G)\), then
\begin{equation}\label{eq:degree-sum-identities}
        (Q\one)_v=2d(v),\qquad
        (Q^2\one)_v=2(Qd)_v=2d(v)^2+2s(v).
\end{equation}
These identities turn a polynomial relation in \(Q\) applied to \(\one\) into
pointwise conditions on the degrees and 2-degrees, and underlie both the
criterion of Section~\ref{sec:zero-q-main} and the constructions of
Section~\ref{sec:families}.

Let \(a,b\in\Z\) and \(t\in\Z_{>0}\).  For a bipartite graph \(G\) with
ordered bipartition \(V(G)=V_+\sqcup V_-\) and sign vector \(\sgn\), the
\emph{\((a,b,t)\)-signed degree-sum equation} is
\[
        Q^2\one - aQ\one + b\one = t\sgn .
\]
By \eqref{eq:degree-sum-identities}, this equation is equivalent to
\[
        s(v)=
        \begin{cases}
        -d(v)^2+ad(v)+(t-b)/2, & v\in V_+,\\
        -d(v)^2+ad(v)-(t+b)/2, & v\in V_-.
        \end{cases}
\]

\section{A criterion for the zero \texorpdfstring{\(Q\)-main}{Q-main} case}
\label{sec:zero-q-main}

By Lemma~\ref{lem:zero-main}, the zero eigenvalue is \(Q\)-main precisely when
the graph is bipartite and the two bipartition classes have unequal
cardinalities.  We therefore work directly in this setting and use the
ordering convention from Section~\ref{sec:preliminaries}; thus
\(V(G)=V_+\sqcup V_-\) and \(\sgn\) denotes the corresponding sign vector.
The following criterion combines the cyclic-subspace description of main
eigenvalues with the one-dimensional kernel of \(Q\).

\begin{theorem} \label{thm:zero-main-general-criterion}
Let \(\ell\ge 3\), and let \(G\) be a bipartite graph whose two bipartition
classes have unequal cardinalities.  Let \(V(G)=V_+\sqcup V_-\) and \(\sgn\)
be as in the convention above.  The following are equivalent.
\begin{enumerate}[label=(\alph*)]
    \item \(G\) has exactly \(\ell\) \(Q\)-main eigenvalues, one of which is
    zero.
    \item There exist \(t\in\Z_{>0}\) and a monic polynomial \(p\in\Z[x]\) of
    degree \(\ell-1\), whose roots are all distinct and positive, such that
    \begin{enumerate}[label=(\roman*)]
        \item \(\one,Q\one,\ldots,Q^{\ell-2}\one,\sgn\) are linearly
        independent;
        \item \(p(Q)\one=(-1)^{\ell-1}t\sgn\).
    \end{enumerate}
\end{enumerate}
In this case
\[
        \Phi_Q(G,x)=xp(x).
\]
\end{theorem}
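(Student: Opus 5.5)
We prove both directions using the spectral decomposition \(\one=\sum_\lambda E_\lambda\one\) together with Lemma~\ref{lem:krylov} and Lemma~\ref{lem:zero-main}. The key observation is that \(Q\) is positive semidefinite, so every \(Q\)-main eigenvalue is nonnegative. By Lemma~\ref{lem:zero-main}, the projection of \(\one\) onto \(\ker Q\) is
\[
E_0\one=\frac{|V_+|-|V_-|}{n}\,\sgn,
\]
and this is nonzero by hypothesis. Hence zero is automatically \(Q\)-main under the hypotheses, and the content of (a) is the count \(\ell\).

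\emph{(a)\(\Rightarrow\)(b).} Let the \(Q\)-main eigenvalues be \(0,\lambda_1,\dots,\lambda_{\ell-1}\) with the \(\lambda_i>0\) distinct, and put \(p(x)=\prod_{i=1}^{\ell-1}(x-\lambda_i)\). By Lemma~\ref{lem:krylov}(iii),(iv), \(\Phi_Q=xp\) lies in \(\Z[x]\) and is monic; dividing by \(x\) keeps \(p\) monic and integral. Applying \(p(Q)\) to the decomposition, all components \(E_{\lambda_i}\one\) are killed, leaving
\[
p(Q)\one=p(0)E_0\one=p(0)\frac{|V_+|-|V_-|}{n}\,\sgn .
\]
Here \(p(0)=(-1)^{\ell-1}\prod\lambda_i\), so we set \(t=\prod\lambda_i\cdot(|V_+|-|V_-|)/n>0\). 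Since \(p(Q)\one\) is an integer vector and \(\sgn\) has entries \(\pm1\), we get \(t\in\Z\); thus \(t\in\Z_{>0}\) and (ii) holds.

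For (i), note that \(\sgn\) is a nonzero multiple of \(E_0\one\), and \(\one,\dots,Q^{\ell-2}\one\) lie in \(\calK(Q,\one)\), which has dimension \(\ell\) by Lemma~\ref{lem:krylov}(i). Suppose \(\sum_{j\le\ell-2}c_jQ^j\one+c\,\sgn=0\). Apply \(Q\): since \(Q\sgn=0\), the polynomial \(x\sum_j c_jx^j\), of degree \(\le\ell-1\), annihilates \(\one\). Lemma~\ref{lem:krylov}(ii) says any such polynomial must be divisible by \(\Phi_Q\), which has degree \(\ell\), so all \(c_j=0\). Then \(c\,\sgn=0\) forces \(c=0\).

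\emph{(b)\(\Rightarrow\)(a).} From (ii), \(Qp(Q)\one=(-1)^{\ell-1}tQ\sgn=0\), so by Lemma~\ref{lem:krylov}(ii), \(m:=m_{Q,\one}\) divides \(xp(x)\). Since \(xp\) has distinct roots (those of \(p\) are positive), \(m\) is a product of some of the linear factors \(x,x-\lambda_i\). We now show every factor appears.

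\begin{itemize}
\item Zero is main because \(|V_+|\ne|V_-|\), so \(x\mid m\).
\item Suppose some factor \(x-\lambda_i\) were missing. Then \(m\) divides \(xq(x)\), where \(q=p/(x-\lambda_i)\) has degree \(\ell-2\), so \(Qq(Q)\one=0\), i.e.\ \(q(Q)\one\in\ker Q=\Span\{\sgn\}\).
\item Write \(q(Q)\one=c\,\sgn\). Because \(q\) is monic of degree \(\ell-2\), this is a nontrivial linear relation among \(\one,\dots,Q^{\ell-2}\one,\sgn\) (the coefficient of \(Q^{\ell-2}\one\) is \(1\)), contradicting (i).
\end{itemize}

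Hence \(m=xp\). By Lemma~\ref{lem:krylov}(i), \(G\) has exactly \(\ell\) \(Q\)-main eigenvalues, namely \(0\) and the roots of \(p\), and \(\Phi_Q=xp\).

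The only delicate points are the integrality of \(t\) in the forward direction and the use of the monic leading coefficient of \(q\) to contradict (i) in the converse. Both are handled above, so I expect no serious obstacle. The hypothesis \(\ell\ge3\) is not really needed by this argument; it is presumably there to match the paper's setting.
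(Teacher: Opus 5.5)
Your proof is correct and follows the paper's strategy: you identify \(p(Q)\one\) as a multiple of \(\sgn\) via \(\ker Q=\Span\{\sgn\}\), read off its sign from \(p(0)\), and in the converse pin down \(m_{Q,\one}\) using both the divisibility \(m_{Q,\one}\mid xp\) and condition~(i). The differences are only local. You obtain~(i) by applying \(Q\) to kill \(\sgn\), where the paper exchanges \(Q^{\ell-1}\one\) for \(\sgn\). In the converse you rule out missing linear factors of \(xp\) one at a time, using \(|V_+|\ne|V_-|\) for the factor \(x\), where the paper counts \(\dim\calK(Q,\one)\ge\ell\) from \(\sgn\in\calK(Q,\one)\).
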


\begin{proof}
\emph{Necessity.} Suppose that (a) holds, and let the \(Q\)-main eigenvalues be
\(0,\theta_1,\ldots,\theta_{\ell-1}\).  These are distinct, and since \(Q\) is
positive semidefinite, the nonzero ones \(\theta_i\) are positive.  Put
\(p(x)=\prod_{i=1}^{\ell-1}(x-\theta_i)\), a monic polynomial of degree
\(\ell-1\) with distinct positive roots; then \(\Phi_Q(G,x)=xp(x)\).  Since
\(Q\) and \(\one\) are integral, Lemma~\ref{lem:krylov} shows that \(xp(x)\)
has integer coefficients, and as its constant term vanishes, \(p\in\Z[x]\).

By Lemma~\ref{lem:krylov}, \(Qp(Q)\one=0\), so \(p(Q)\one\in\ker Q\).  This
vector is nonzero, for otherwise the minimal polynomial of \(Q\) on
\(\calK(Q,\one)\), which has degree \(\ell\), would divide the polynomial
\(p\) of degree \(\ell-1\).  Hence Lemma~\ref{lem:zero-main} gives
\(p(Q)\one=c\sgn\) for some nonzero scalar \(c\).

With the chosen order, \(\langle\one,\sgn\rangle>0\).  Since \(Q\sgn=0\),
\[
        c|V(G)|
        =\langle p(Q)\one,\sgn\rangle
        =\langle\one,p(Q)\sgn\rangle
        =p(0)\langle\one,\sgn\rangle .
\]
The roots of \(p\) are positive, so \(p(0)\) has sign \((-1)^{\ell-1}\).
Thus \(p(Q)\one=(-1)^{\ell-1}t\sgn\) for some \(t\in\R_{>0}\).  Since
\(p\in\Z[x]\), the vector \(p(Q)\one\) is integral, and hence
\(t\in\Z_{>0}\).

It remains to prove (b)(i).  Since \(\dim\calK(Q,\one)=\ell\), the vectors
\(\one,Q\one,\ldots,Q^{\ell-1}\one\) are linearly independent.  Writing
\(q(x)=p(x)-x^{\ell-1}\), so that
\(\deg q\le \ell-2\), we have
\[
        \sgn=\frac{(-1)^{\ell-1}}{t}\left(
        Q^{\ell-1}\one+q(Q)\one
        \right).
\]
The coefficient of \(Q^{\ell-1}\one\) in this expression is nonzero; replacing
\(Q^{\ell-1}\one\) by \(\sgn\) therefore preserves linear independence.
This proves (b)(i).

\emph{Sufficiency.} Suppose that (b) holds.  Since \(Q\sgn=0\),
\(Qp(Q)\one=(-1)^{\ell-1}tQ\sgn=0\).
Let \(\mu(x)\) be the minimal polynomial of \(Q\) on \(\calK(Q,\one)\).  By
Lemma~\ref{lem:krylov}, \(\mu(x)\) divides \(xp(x)\), so \(\deg\mu\le \ell\).
Furthermore, \(\sgn=\frac{(-1)^{\ell-1}}{t} p(Q)\one\in\calK(Q,\one)\).
By (b)(i), \(\calK(Q,\one)\) contains the \(\ell\) linearly independent
vectors \(\one,Q\one,\ldots,Q^{\ell-2}\one,\sgn\), whence
\(\dim\calK(Q,\one)\ge \ell\).  Thus \(\deg\mu=\dim\calK(Q,\one)\ge \ell\),
and consequently \(\deg\mu=\ell\).  Since \(\mu(x)\) and
\(xp(x)\) are monic of the same degree and \(\mu(x)\mid xp(x)\), we have
\(\mu(x)=xp(x)\).  Lemma~\ref{lem:krylov} gives
\(\Phi_Q(G,x)=xp(x)\).  Thus \(G\) has exactly \(\ell\) \(Q\)-main eigenvalues,
and the zero eigenvalue is \(Q\)-main by Lemma~\ref{lem:zero-main}.
\end{proof}

For \(\ell=3\), the linear independence condition in
Theorem~\ref{thm:zero-main-general-criterion} has a simple degree-theoretic
form.  A bipartite graph with bipartition \(V_+\sqcup V_-\) is
\emph{semiregular} if its degree is constant on each bipartition class.
For such a graph, if these two constants are \(r_+\) and \(r_-\), then
\[
        d=\frac{r_++r_-}{2}\one+\frac{r_+-r_-}{2}\sgn,
\]
so \(\one,d,\sgn\) are linearly dependent.  Conversely, any nontrivial
relation \(\alpha\one+\beta d+\gamma\sgn=0\) has \(\beta\ne0\); otherwise
\(\sgn\) would be a scalar multiple of \(\one\).  Hence
\(d(v)=-(\alpha+\gamma\sgn(v))/\beta\), which is constant on each bipartition
class.  Thus semiregularity is equivalent to the linear dependence of
\(\one,d,\sgn\).
The criterion of Deng and Huang for two \(Q\)-main
eigenvalues~\cite[Theorem~2.5]{DengHuang2013} and the equality characterization
of Feng and Yu~\cite[Lemma~2.3]{FengYu2009} show that a nontrivial connected
graph \(G\) is a nonregular semiregular bipartite graph if and only if its
\(Q\)-main eigenvalues are precisely \(0\) and its largest \(Q\)-eigenvalue.

\begin{theorem} \label{thm:three-main-criterion}
Let \(G\) be a bipartite graph whose two bipartition classes have unequal
cardinalities.  Let \(V(G)=V_+\sqcup V_-\) and \(\sgn\) be as in the convention
above.  Then \(G\) has exactly three \(Q\)-main eigenvalues, one of which is
zero, if and only if there exist \(a,b,t\in\Z_{>0}\) with \(a^2>4b\) such that
\begin{enumerate}[label=(\roman*)]
    \item \(G\) is not semiregular;
    \item \(Q^2\one - aQ\one + b\one = t\sgn\).
\end{enumerate}
In this case the \(Q\)-main polynomial is \(x(x^2 - ax + b)\).  Moreover, any
such triple \((a,b,t)\) necessarily satisfies \(b\equiv t\pmod 2\) and
\(0<t<b\).
\end{theorem}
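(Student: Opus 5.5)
This is the case \(\ell=3\) of Theorem~\ref{thm:zero-main-general-criterion}. We write \(p(x)=x^2-ax+b\in\Z[x]\) and translate the hypotheses of part (b) into conditions on \(a,b,t\).

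First, the roots of \(p\). Since \(p\) is monic of degree \(2\), its roots are distinct and positive exactly when three things hold: the discriminant is positive, \(a^2>4b\); the sum of the roots is positive, \(a>0\); and their product is positive, \(b>0\). So "\(p\) monic integral of degree \(2\) with distinct positive roots" is equivalent to \(a,b\in\Z_{>0}\) with \(a^2>4b\).

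Second, the sign. Condition (b)(ii) reads \(p(Q)\one=(-1)^{2}t\sgn=t\sgn\), which is exactly (ii).

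Third, linear independence. Condition (b)(i) asks that \(\one,Q\one,\sgn\) be linearly independent. Since \(Q\one=2d\), this is the independence of \(\one,d,\sgn\). The discussion preceding the theorem shows this is equivalent to \(G\) not being semiregular, which is (i).

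Hence (a) with \(\ell=3\) is equivalent to the stated conditions, and Theorem~\ref{thm:zero-main-general-criterion} gives \(\Phi_Q(G,x)=x(x^2-ax+b)\).

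For the additional constraints, I would use the vertexwise form of the signed degree-sum equation from Section~\ref{sec:preliminaries}. Since \(s(v)\), \(d(v)^2\) and \(ad(v)\) are integers, \((t-b)/2\) must be an integer. This gives \(b\equiv t\pmod 2\).

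For \(0<t<b\), I would pair the equation with \(\sgn\), as in the proof of the general criterion. Because \(Q\) is symmetric and \(Q\sgn=0\),
\[
\langle p(Q)\one,\sgn\rangle=\langle \one,p(Q)\sgn\rangle=p(0)\langle\one,\sgn\rangle=b(|V_+|-|V_-|).
\]
The left side also equals \(t\langle\sgn,\sgn\rangle=tn\), where \(n=|V(G)|\). Therefore
\[
tn=b(|V_+|-|V_-|).
\]
Since \(0<|V_+|-|V_-|<n\), and \(b>0\) is already known, this yields \(0<t<b\). The strict inequality \(|V_+|-|V_-|<n\) holds because \(V_-\) is nonempty: \(G\) has at least one edge.

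The only delicate step is making sure the sign bookkeeping matches the labelling convention \(|V_+|>|V_-|\) and the factor \((-1)^{\ell-1}\) when \(\ell=3\). Everything else is direct substitution into Theorem~\ref{thm:zero-main-general-criterion}.
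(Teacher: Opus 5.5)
Your proposal is correct and follows essentially the same route as the paper: you specialize Theorem~\ref{thm:zero-main-general-criterion} to \(\ell=3\), convert root positivity into \(a,b\in\Z_{>0}\) with \(a^2>4b\), and identify linear independence of \(\one,Q\one,\sgn\) with non-semiregularity. You then get \(b\equiv t\pmod 2\) from the vertexwise equation and \(0<t<b\) from the identity \(t|V(G)|=b(|V_+|-|V_-|)\), exactly as the paper does.
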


\begin{proof}
Apply Theorem~\ref{thm:zero-main-general-criterion} with \(\ell=3\).  Write
\(p(x)=x^2-ax+b\).  For a monic quadratic with integral coefficients, having
two distinct positive roots is equivalent to
\(a,b\in\Z_{>0}\) and \(a^2>4b\).  The linear-independence condition in
Theorem~\ref{thm:zero-main-general-criterion} is that
\(\one,Q\one,\sgn\) be linearly independent.  Since \(Q\one=2d\), this is
equivalent to the linear independence of \(\one,d,\sgn\), which is precisely
non-semiregularity by the preceding paragraph.

It remains to record the elementary restrictions on \((a,b,t)\).  Evaluating
the signed degree-sum equation at a vertex gives
\(2d(v)^2+2s(v)-2ad(v)+b=t\sgn(v)\), whence \(b\equiv t\pmod 2\).
Taking its inner product with \(\sgn\) gives
\(t|V(G)|=b(|V_+|-|V_-|)\),
because \(Q\sgn=0\).  By the ordering convention, \(|V_+|>|V_-|\); since both
bipartition classes are nonempty, \(0<t<b\).
\end{proof}

\section{Infinite families of prescribed cyclomatic number}\label{sec:families}

Theorem~\ref{thm:three-main-criterion} reduces the property of having exactly
three \(Q\)-main eigenvalues, one of which is zero, to a single vertexwise
identity.  We now show that its solutions are far from sporadic: for every
cyclomatic number they form infinite families of unbounded diameter.

For \(a,b\in\Z\) and \(t\in\Z_{>0}\), let \(\calG(a,b,t)\) denote the class of
bipartite graphs with unequal bipartition classes, ordered as in
Section~\ref{sec:preliminaries}, that have exactly three \(Q\)-main
eigenvalues including zero and satisfy the \((a,b,t)\)-signed degree-sum
equation.  Let \(\calG_k(a,b,t)\) be the subclass of cyclomatic number \(k\).

Theorem~\ref{thm:three-main-criterion} gives the necessary conditions
\(b\equiv t\pmod2\) and \(0<t<b\).  Thus
\[
        b=t+2q \qquad (q\ge1).
\]
The parameters used below come from the following calculation.  Suppose that
the vertices on the \(W_-\)-side have degree \(t+q\), and that all their
neighbours in the final graph have degree \(t^2+1\).  The signed degree-sum
equation at such a vertex is
\[
        (t+q)(t^2+1)=-(t+q)^2+a(t+q)-(t+q),
\]
so
\[
        a=t^2+t+q+2.
\]
Hence, for \(t=r\) and \(b=r+2q\), this degree arrangement leads to
\((a,b,t)=(r^2+r+q+2,r+2q,r)\).  We first record examples on the line
\(t=1\), and then realize the case \(q=1\), \(r\ge2\), with arbitrary
prescribed cyclomatic number.

The boundary case \(q=1\), \(r=1\), namely \((a,b,t)=(5,3,1)\), is also
realized, although it is not needed in the infinite-family construction below.
Take the path \(v_1u_1vu_2v_2\), where \(u_1,u_2\in V_+\) and
\(v,v_1,v_2\in V_-\), and add two leaves in \(V_+\) adjacent to each of
\(v_1\) and \(v_2\).  The resulting tree satisfies
\[
        s(v)=-d(v)^2+5d(v)-1 \quad (v\in V_+),
        \qquad
        s(v)=-d(v)^2+5d(v)-2 \quad (v\in V_-),
\]
and hence belongs to \(\calG_0(5,3,1)\).

The same line \(t=1\) also has infinite families in all higher cases except
possibly \(q=2\).

\begin{proposition}\label{prop:t-one-family}
For every \(q\ge3\), there are infinitely many connected graphs in
\(\calG(q+4,2q+1,1)\).
\end{proposition}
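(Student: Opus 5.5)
The plan is to build explicit graphs that satisfy the \((q+4,2q+1,1)\)-signed degree-sum equation and are not semiregular, and then invoke Theorem~\ref{thm:three-main-criterion}. The parameters are admissible: \(a=q+4\), \(b=2q+1\) and \(t=1\) are positive integers, and \(a^2-4b=q^2+4q+12>0\). By the reformulation at the end of Section~\ref{sec:preliminaries}, the equation says that
\[
s(v)=-d(v)^2+(q+4)d(v)-q \ (v\in V_+),\qquad s(v)=-d(v)^2+(q+4)d(v)-q-1 \ (v\in V_-).
\]
Following the heuristic preceding the proposition, I use hubs in \(V_-\) of degree \(q+1\) whose neighbours all have degree \(t^2+1=2\).

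\textbf{Vertex types.} I use four types of vertex and check the equation for each.
\begin{itemize}
\item Hubs \(h\in V_-\), of degree \(q+1\), all of whose neighbours are of type \(P\). Then \(s=2(q+1)\), and the required value is \(-(q+1)^2+(q+4)(q+1)-q-1=2q+2\).
\item Connectors \(P\in V_+\), of degree \(2\), each adjacent to one hub and one vertex \(w\) of degree \(3\). Then \(s=(q+1)+3=q+4\), which matches \(-4+2(q+4)-q\).
\item Vertices \(w\in V_-\), of degree \(3\), each adjacent to one connector and two vertices of degree \(q\). Then \(s=2+2q\), which matches \(-9+3(q+4)-q-1\).
\item Vertices \(x\in V_+\), of degree \(q\), adjacent only to vertices \(w\). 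Then \(s=3q\), which matches \(-q^2+(q+4)q-q\).
\end{itemize}
So any connected graph assembled from these local configurations satisfies the equation.

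\textbf{Global assembly.} The \(w\)'s and \(x\)'s form a bipartite graph with degrees \(2\) and \(q\). I realize it as the vertex--edge incidence of a graph: take a connected simple \(q\)-regular graph \(R\) on \(N=2m(q+1)\) vertices (\(m\ge1\)). Such a graph exists since \(N\) is even and \(N>q\); for instance, a suitable circulant works. Let the \(x\)'s be the vertices of \(R\) and the \(w\)'s its edges, with incidence as adjacency. This gives \(mq(q+1)\) vertices \(w\).

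Next, partition the \(w\)'s into \(H=mq\) groups of size \(q+1\). For each group, add one hub, and join the hub to each \(w\) of the group through its own connector \(P\). The resulting graph is connected because \(R\) is connected and every other vertex hangs off it. It is bipartite with
\[
V_+=\{P\}\cup\{x\},\qquad V_-=\{h\}\cup\{w\}.
\]
Hence \(|V_+|-|V_-|=N-H=m(q+2)>0\), so the ordering convention is respected.

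\textbf{Applying the criterion.} \(V_+\) contains vertices of degree \(2\) and of degree \(q\), and these differ because \(q\ge3\); so the graph is not semiregular. This is exactly where \(q\ge3\) is used, since for \(q=2\) this design collapses to a semiregular graph. Theorem~\ref{thm:three-main-criterion} therefore puts the graph in \(\calG(q+4,2q+1,1)\). Varying \(m\) changes the order, which yields infinitely many pairwise nonisomorphic examples.

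The only real obstacle was finding vertex types whose signed 2-degree constraints close up consistently. Once the four types above are found, the remaining steps are routine checks: existence of a connected \(q\)-regular graph \(R\), the divisibility that forces \(H=mq\), and the sign of \(|V_+|-|V_-|\).
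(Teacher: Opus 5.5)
Your proposal is correct and is essentially the paper's own proof: subdivide a connected \(q\)-regular graph on \(2n(q+1)\) vertices, hang a pendant \(V_+\)-vertex on each subdivision vertex, join these in groups of \(q+1\) to new \(V_-\)-hubs, and then check the same four local identities, the count \(|V_+|-|V_-|=n(q+2)\), and non-semiregularity from \(q\ge3\). One harmless slip: \(a^2-4b=(q+4)^2-4(2q+1)=q^2+12\), not \(q^2+4q+12\), but only its positivity is used.
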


\begin{proof}
Let \(n\ge1\) and put \(N=2n(q+1)\).  Start with a connected \(q\)-regular graph
on \(N\) vertices.  Subdivide every edge once, put the original vertices in
\(V_+\), and put the subdivision vertices in \(V_-\).  Next, for each
subdivision vertex, add a new vertex in \(V_+\) adjacent only to it.  Since the
initial graph has \(nq(q+1)\) edges, this gives \(nq(q+1)\) new vertices in
\(V_+\).

Partition these new \(V_+\)-vertices into \(nq\) sets of size \(q+1\).  For each
set, add one new vertex in \(V_-\) and join it to all vertices of that set.
Let \(G\) be the resulting bipartite graph.  Then \(G\) is connected.

The original \(V_+\)-vertices have degree \(q\), and all their neighbours have
degree \(3\).  The new \(V_+\)-vertices have degree \(2\), with one neighbour of
degree \(3\) and one neighbour of degree \(q+1\).  The subdivision vertices in
\(V_-\) have degree \(3\), with neighbours of degrees \(q,q,2\), and the further
vertices in \(V_-\) have degree \(q+1\), all of whose neighbours have degree
\(2\).  Therefore
\[
        s(v)=-d(v)^2+(q+4)d(v)-q \quad (v\in V_+),
        \qquad
        s(v)=-d(v)^2+(q+4)d(v)-(q+1) \quad (v\in V_-),
\]
which is the \((q+4,2q+1,1)\)-signed degree-sum equation.  Moreover,
\[
        |V_+|=n(q+1)(q+2),\qquad |V_-|=nq(q+2),
\]
so the prescribed \(V_+\)-class is the larger one.  Since \(q\ge3\), the graph
is not semiregular, and \((q+4)^2>4(2q+1)\).  Theorem~\ref{thm:three-main-criterion}
therefore gives \(G\in\calG(q+4,2q+1,1)\).
\end{proof}

Such a connected \(q\)-regular graph on \(N=2n(q+1)\) vertices may be chosen on
the vertex set of the cycle \(C_N\) as follows.  If \(q=2s\), join every two
vertices whose distance on \(C_N\) is at most \(s\).  If \(q=2s+1\), add to this
graph the perfect matching joining opposite vertices of \(C_N\).

We now turn to the first layer \(q=1\) with \(t=r\ge2\).  The next construction
extends a suitable auxiliary bipartite graph by attaching stars and double
stars.

\begin{construction} \label{con:F}
Fix \(r\ge2\).  Let \(H\) be a bipartite graph with bipartition
\(W_+\sqcup W_-\) such that every vertex in \(W_-\) has degree \(r+1\),
and every vertex in \(W_+\) has degree either \(1\) or \(r^2\).

Obtain \(F_r(H)\) from \(H\) as follows.  For distinct choices of \(w\) and
\(j\), all stars and double stars used below are taken pairwise disjoint before
the indicated identifications.
\begin{enumerate}[label=(\roman*)]
    \item For each vertex \(w\in W_+\) with \(d_H(w)=r^2\), identify \(w\)
    with a leaf of a new copy of \(K_{1,r^2+r+1}\).
    \item For each vertex \(w\in W_+\) with \(d_H(w)=1\), take \(r\) new
    double stars \(D_{w,1},\ldots,D_{w,r}\).  In \(D_{w,j}\), let \(c_{w,j}\)
    and \(c'_{w,j}\) be the two centers, with degrees \(r\) and \(r^2+r+1\)
    in \(D_{w,j}\), respectively.  Identify \(w\) with \(c_{w,j}\) for every
    \(j\).
\end{enumerate}
\end{construction}

\begin{lemma} \label{lem:extension}
For \(H\) as in Construction~\ref{con:F}, let \(V_+\sqcup V_-\) be the
bipartition of \(F_r(H)\) extending the prescribed bipartition
\(W_+\sqcup W_-\) of \(H\).  Then this prescribed \(V_+\)-class is the larger
one, \(F_r(H)\) has the same cyclomatic number as \(H\), and \(F_r(H)\)
satisfies the \((r^2+r+3,r+2,r)\)-signed degree-sum equation.
\end{lemma}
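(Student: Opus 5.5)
The plan is a direct vertex-by-vertex verification. With \(a=r^2+r+3\), \(b=r+2\), \(t=r\), the \((a,b,t)\)-signed degree-sum equation from Section~\ref{sec:preliminaries} becomes
\[
s(v)=-d(v)^2+a\,d(v)-1\quad(v\in V_+),\qquad
s(v)=-d(v)^2+a\,d(v)-(r+1)\quad(v\in V_-).
\]
I will list every vertex type of \(F_r(H)\), record its degree and the degrees of its neighbours in \(F_r(H)\), and check the appropriate identity.

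\emph{Degrees.} First I fix the degrees. Vertices of \(W_-\) keep degree \(r+1\). A vertex \(w\in W_+\) with \(d_H(w)=r^2\) gains one edge, to a star center. A vertex with \(d_H(w)=1\) gains \(r\cdot r\) edges, namely \(r\) from each of the \(r\) double stars it is identified with. So \emph{every} vertex of \(W_+\) has degree \(r^2+1\) in \(F_r(H)\); this uniformity is the point of the construction.

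The remaining vertex types are as follows.
\begin{itemize}
\item Star centers and the centers \(c'_{w,j}\) have degree \(D=r^2+r+1\), lie in \(V_-\), and have \(r^2+r\) leaf neighbours together with one neighbour of degree \(r^2+1\) (respectively \(w\) or \(c_{w,j}=w\)).
\item Leaves of these centers lie in \(V_+\), have degree \(1\), and have a unique neighbour of degree \(D\).
\item The \(r-1\) leaves of each \(c_{w,j}\) lie in \(V_-\), have degree \(1\), and have a unique neighbour of degree \(r^2+1\).
\end{itemize}

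\emph{Checking the identities.} I then compute \(s(v)\) for each type. For \(v\in W_-\) I get \(s=(r+1)(r^2+1)\), which matches \((r+1)(-(r+1)+a-1)\). For \(w\in W_+\) of \(H\)-degree \(r^2\) I get \(s=r^2(r+1)+D=r^3+2r^2+r+1\). For \(w\in W_+\) of \(H\)-degree \(1\) I get \(s=(r+1)+r\bigl(D+(r-1)\bigr)\), which is the same number. Both agree with \((r^2+1)(a-r^2-1)-1=(r^2+1)(r+2)-1\).

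The centers of degree \(D\) have \(s=(r^2+1)+(r^2+r)=2r^2+r+1=D(a-D)-(r+1)\), since \(a-D=2\). The leaves of these centers have \(s=D=a-2\), which fits the \(V_+\) equation. The leaves of \(c_{w,j}\) have \(s=r^2+1=a-1-(r+1)\), which fits the \(V_-\) equation.

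\emph{Cyclomatic number.} Each step of Construction~\ref{con:F} attaches a tree (a star or a double star) to the current graph by identifying a single vertex. The double stars \(D_{w,j}\) for a fixed \(w\) meet only in \(w\). Each such attachment adds equally many vertices and edges, so the cyclomatic number, and connectivity, are preserved.

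\emph{Class sizes.} Finally I compare \(|V_+|\) and \(|V_-|\). Let \(n_1,n_2\) be the numbers of vertices of \(W_+\) of \(H\)-degree \(1\) and \(r^2\). Counting the edges of \(H\) gives \((r+1)|W_-|=n_1+r^2n_2\), and hence \(|W_-|<n_1+n_2\) because \(r\ge2\) and \(W_+\neq\emptyset\). Each star contributes \(r^2+r\) vertices to \(V_+\) and \(1\) to \(V_-\). Each double star contributes \(r^2+r\) to \(V_+\) and \(r\) to \(V_-\). So every gadget increases \(|V_+|-|V_-|\), and therefore \(|V_+|>|V_-|\).

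I expect no real obstacle. The main risk is bookkeeping: correctly placing the leaves of \(c_{w,j}\) in \(V_-\) rather than \(V_+\), and confirming that both kinds of \(W_+\)-vertices end up with the same degree \(r^2+1\) and the same \(2\)-degree.
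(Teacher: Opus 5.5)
Your vertex-by-vertex check of the $(r^2+r+3,r+2,r)$-identities is correct and matches the paper's proof, and so is your cyclomatic-number argument. The gap is in the class-size step. The inference ``$(r+1)|W_-|=n_1+r^2n_2$, hence $|W_-|<n_1+n_2$'' is false for $r\ge2$, because $r^2/(r+1)>1$.

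Concretely, take $r=2$ and $H=K_{3,4}$, with $W_+$ the three vertices of degree $4=r^2$ and $W_-$ the four vertices of degree $3=r+1$. This is a valid input to Construction~\ref{con:F}, yet $|W_-|=4>3=|W_+|$. So your base inequality fails. The observation that every gadget increases $|V_+|-|V_-|$ then does not by itself yield $|V_+|>|V_-|$.

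The conclusion is still true, and there are two short repairs.
\begin{enumerate}[label=(\alph*)]
    \item \emph{Finish the count.} The gadgets add $n_2(r^2+r-1)+r^3n_1$ to the difference, so $|V_+|-|V_-|=n_1(r^3+1)+n_2(r^2+r)-|W_-|$. Since $W_+\neq\emptyset$, we have $|W_-|=(n_1+r^2n_2)/(r+1)<n_1+rn_2$. Hence $|V_+|-|V_-|>r^3n_1+r^2n_2\ge0$.
    \item \emph{Use the paper's argument.} Your coordinate identities hold for the ordered bipartition $V_+\sqcup V_-$ whichever class is larger, so $Q^2\one-aQ\one+b\one=r\sigma$. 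Take the inner product with $\sigma$ and use $Q\sigma=0$. This gives $r|V(F_r(H))|=(r+2)(|V_+|-|V_-|)$, which forces $|V_+|>|V_-|$ with no counting at all.
\end{enumerate}
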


\begin{proof}
Each attached graph is a tree, and in each case exactly one of its vertices is
identified with a vertex of \(H\).  No edge is added between vertices already in
\(H\).  Hence bipartiteness is preserved and the cyclomatic number is unchanged.

Let
\(X=\{w\in W_+:d_H(w)=r^2\}\) and
\(Y=\{w\in W_+:d_H(w)=1\}\).  By construction,
\(d_{F_r(H)}(w)=r^2+1\) for \(w\in W_+\), and
\(d_{F_r(H)}(u)=r+1\) for \(u\in W_-\).  Moreover,
\(V(F_r(H))\setminus V(H)\) consists precisely of the following vertices.  If
\(w\in X\), the copy of \(K_{1,r^2+r+1}\) attached at \(w\) has its center in
\(V_-\), of degree \(r^2+r+1\), and its remaining \(r^2+r\) leaves in \(V_+\).
If \(w\in Y\) and \(1\le j\le r\), then
\(c'_{w,j}\in V_-\) has degree \(r^2+r+1\); the \(r-1\) leaves adjacent to
\(c_{w,j}=w\) lie in \(V_-\), and the \(r^2+r\) leaves adjacent to
\(c'_{w,j}\) lie in \(V_+\).

Put \((a,b,t)=(r^2+r+3,r+2,r)\).  The signed degree-sum equation is equivalent
to \(s(v)=-d(v)^2+ad(v)-1\) on \(V_+\), and to
\(s(v)=-d(v)^2+ad(v)-(r+1)\) on \(V_-\).  It remains to verify these coordinate
identities, with all degrees and neighbor-degree sums taken in \(F_r(H)\).  If
\(w\in X\),
then
\(s(w)=r^2(r+1)+(r^2+r+1)\); if \(w\in Y\), then
\(s(w)=(r+1)+r(r^2+r+1)+r(r-1)\).  In both cases \(d(w)=r^2+1\) and
\(s(w)=-(r^2+1)^2+a(r^2+1)-1\), as required for vertices of \(W_+\subseteq
V_+\).

For \(u\in W_-\), \(d(u)=r+1\) and
\(s(u)=(r+1)(r^2+1)=-(r+1)^2+a(r+1)-(r+1)\).  Each center in
\(V(F_r(H))\setminus V(H)\) has degree \(r^2+r+1\) and neighbor-degree sum
\((r^2+1)+(r^2+r)\), which equals
\(-(r^2+r+1)^2+a(r^2+r+1)-(r+1)\).  A leaf in \(V_+\) has
\(d=1\) and \(s=r^2+r+1=a-2=-d^2+ad-1\), while a leaf in \(V_-\) has
\(d=1\) and \(s=r^2+1=a-r-2=-d^2+ad-(r+1)\).  Hence, in vector form,
\[
        Q^2\one-(r^2+r+3)Q\one+(r+2)\one=r\sgn .
\]
Taking the inner product with \(\sgn\) gives
\[
        r|V(F_r(H))|=(r+2)(|V_+|-|V_-|),
\]
because \(Q\sgn=0\) and \(\langle\sgn,\sgn\rangle=|V(F_r(H))|\).  Hence
\(|V_+|-|V_-|=r|V(F_r(H))|/(r+2)>0\).  Thus the bipartition inherited from the
construction is already ordered with \(V_+\) as the larger class, and the
displayed vector identity is the required signed degree-sum equation.
\end{proof}

\begin{lemma} \label{lem:skeleton}
For any integers \(d_+\ge4\) and \(d_-\ge3\), and any nonnegative integer
\(k\), there is a bipartite graph \(H\) of cyclomatic number \(k\), with
bipartition \(W_+\sqcup W_-\), such that all vertices in \(W_-\) have degree
\(d_-\), the degrees in \(W_+\) lie in \(\{1,d_+\}\), and \(W_+\) contains a
leaf.
\end{lemma}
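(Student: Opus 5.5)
I would build \(H\) explicitly, starting from a connected bipartite "core" of cyclomatic number \(k\) and then completing degrees with trees. First fix a core \(C\): for \(k=0\) a single vertex of \(W_-\); for \(k\ge1\) a graph such as \(K_{2,k+1}\), with its two vertices of degree \(k+1\) placed in \(W_-\) and its \(k+1\) vertices of degree \(2\) in \(W_+\). This core has \(2(k+1)\) edges and \(k+3\) vertices, so its cyclomatic number is \(k\). It has degree at most \(k+1\) on the \(W_-\) side and degree \(2\) on the \(W_+\) side.

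The difficulty is that \(d_-\) may be smaller than \(k+1\), so I would use a more flexible core when \(k\) is large. Take an even cycle \(C_{2m}\) and add \(k-1\) chords between \(W_+\)- and \(W_-\)-vertices. The chords should be spread out so that each vertex receives at most one chord. Then every vertex has degree \(2\) or \(3\), the graph stays bipartite, and the cyclomatic number is \(k\). Since \(d_-\ge3\) and \(d_+\ge4\), every core vertex has degree at most its target (\(d_-\) on \(W_-\), \(d_+\) on \(W_+\)). Choosing \(m\ge k\) leaves room for such a chord system. For \(k=0\), use a single \(W_-\)-vertex.

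Next, deficiencies are removed by attaching pendant trees, which does not change the cyclomatic number. A \(W_-\)-vertex of deficiency \(e\) receives \(e\) new \(W_+\)-leaves. A \(W_+\)-vertex of deficiency \(e\) receives \(e\) new \(W_-\)-neighbours. Each new \(W_-\)-vertex has degree \(1\) and must be completed to degree \(d_-\) by attaching \(d_--1\) new \(W_+\)-leaves. All newly created \(W_+\)-vertices are leaves, which is allowed since degrees in \(W_+\) may lie in \(\{1,d_+\}\). The process therefore terminates after one round: every \(W_-\)-vertex has degree exactly \(d_-\), and every \(W_+\)-vertex has degree \(d_+\) (core vertices) or \(1\) (new leaves).

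Finally, I would check that \(W_+\) contains a leaf. If some \(W_-\)-vertex of the core has positive deficiency, we are done. Otherwise a core \(W_+\)-vertex has deficiency \(d_+-2\ge2\), which produces new \(W_-\)-vertices and hence new \(W_+\)-leaves. The only real obstacle is realizing the chord system for arbitrary \(k\). I would handle it concretely by taking \(m=2k\), labelling the cycle \(x_0,\dots,x_{4k-1}\) with even indices in \(W_+\), and adding chords \(x_{4i}x_{4i+3}\) for \(i=1,\dots,k-1\). These join opposite classes and are vertex-disjoint, as required.
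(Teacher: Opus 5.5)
Your proof is correct and is essentially the paper's. For $k\ge1$, the ladder $L_k=P_{k+1}\square K_2$ used there is itself the even cycle $C_{2k+2}$ with the $k-1$ inner rungs as vertex-disjoint chords joining opposite classes. Your completion step is also the paper's attachment of pendant edges and leaf-identified copies of $K_{1,d_-}$: pendant $W_+$-leaves at core $W_-$-vertices, and at core $W_+$-vertices new $W_-$-neighbours each carrying $d_--1$ further leaves.

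One small slip: the aside that any $m\ge k$ leaves room for the chords fails for $k=2$, since a bipartite chord of $C_4$ would duplicate an edge. Your explicit choice $m=2k$ with chords $x_{4i}x_{4i+3}$ is sound.
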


\begin{proof}
Let \(L_k=P_{k+1}\square K_2\) be a ladder with vertices
\(u_0,\ldots,u_k\) on one path and \(v_0,\ldots,v_k\) on the other.  Its
bipartition \(A \sqcup B\) is given by
\[
        A =\{u_i:i\ \text{even}\}\cup\{v_i:i\ \text{odd}\},
        \qquad
        B =\{u_i:i\ \text{odd}\}\cup\{v_i:i\ \text{even}\}.
\]
Then \(|V(L_k)|=2(k+1)\) and \(|E(L_k)|=3k+1\), so the cyclomatic number
of \(L_k\) is \(k\).  Figure~\ref{fig:ladder} illustrates the ladder.

\begin{figure}[!ht]
\centering
\begin{tikzpicture}[scale=.82,
    pnode/.style={circle,draw,fill=white,inner sep=1.5pt},
    nnode/.style={circle,draw,fill=black,inner sep=1.5pt},
    every node/.style={font=\scriptsize}]
  \node[pnode] (u0) at (0,.55) {};
  \node[nnode] (v0) at (0,-.55) {};
  \node[nnode] (u1) at (1.1,.55) {};
  \node[pnode] (v1) at (1.1,-.55) {};
  \node[pnode] (u2) at (2.2,.55) {};
  \node[nnode] (v2) at (2.2,-.55) {};
  \node[nnode] (u3) at (3.9,.55) {};
  \node[pnode] (v3) at (3.9,-.55) {};
  \node[pnode] (u4) at (5.0,.55) {};
  \node[nnode] (v4) at (5.0,-.55) {};
  \foreach \i in {0, 1, 3}{
    \pgfmathtruncatemacro{\j}{\i+1}
    \draw (u\i)--(u\j);
    \draw (v\i)--(v\j);
  }
  \draw[dotted] (u2)--(u3);
  \draw[dotted] (v2)--(v3);
  \foreach \i in {0, 1, 2, 3, 4}{\draw (u\i)--(v\i);}
  \node[above=2pt] at (u0) {$u_0$};
  \node[above=2pt] at (u1) {$u_1$};
  \node[above=2pt] at (u4) {$u_k$};
  \node[below=2pt] at (v0) {$v_0$};
  \node[below=2pt] at (v1) {$v_1$};
  \node[below=2pt] at (v4) {$v_k$};
\end{tikzpicture}
\caption{The ladder \(L_k\) and its bipartition.}
\label{fig:ladder}
\end{figure}

Since \(\Delta(L_k)\le3\), the assumptions \(d_+\ge4\) and \(d_-\ge3\) give
\(d_+-d_{L_k}(w)>0\) for \(w\in A\) and
\(d_--d_{L_k}(u)\ge0\) for \(u\in B\).  Construct \(H\) from \(L_k\), with
\(A\subseteq W_+\) and \(B\subseteq W_-\), by making the following disjoint
attachments.  For each \(w\in A\), identify \(w\) with one leaf in each of
\(d_+-d_{L_k}(w)\) new copies of \(K_{1,d_-}\).  For each \(u\in B\), attach
\(d_--d_{L_k}(u)\) pendant edges at \(u\).
The centers of the attached stars lie in \(W_-\), and all their new leaves lie
in \(W_+\); the new endpoints of the pendant edges also lie in \(W_+\).  Since
only pendant trees are attached, the cyclomatic number of \(H\) remains \(k\).
By construction, every vertex in \(W_-\) has degree \(d_-\), and every vertex
in \(W_+\) has degree \(1\) or \(d_+\).  Since
\(A\ne\emptyset\) and \(d_-\ge3\), at least one attached star has a remaining
leaf in \(W_+\).
\end{proof}

\begin{theorem} \label{thm:infinite-family}
For every \(k\ge0\) and \(r\ge2\), the class
\(\calG_k(r^2+r+3,r+2,r)\) contains graphs of arbitrarily large diameter;
in particular, it contains infinitely many pairwise nonisomorphic graphs.  Each
has \(Q\)-main polynomial
\[
        x\bigl(x^2-(r^2+r+3)x+(r+2)\bigr).
\]
\end{theorem}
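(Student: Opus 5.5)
The plan is to combine Lemma~\ref{lem:skeleton}, Construction~\ref{con:F}, Lemma~\ref{lem:extension} and Theorem~\ref{thm:three-main-criterion}.

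\emph{Step 1: the skeleton.} Fix \(k\ge0\) and \(r\ge2\). Apply Lemma~\ref{lem:skeleton} with \(d_+=r^2\) and \(d_-=r+1\). These values are admissible, since \(r^2\ge4\) and \(r+1\ge3\). This yields a bipartite graph \(H_0\) of cyclomatic number \(k\) satisfying exactly the degree hypotheses of Construction~\ref{con:F}, with at least one leaf in \(W_+\).

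\emph{Step 2: the construction and its properties.} By Lemma~\ref{lem:extension}, \(F_r(H)\) has the following properties.
\begin{enumerate}[label=(\alph*)]
    \item It has cyclomatic number \(k\).
    \item The \(V_+\)-class is the larger one.
    \item It satisfies the \((r^2+r+3,r+2,r)\)-signed degree-sum equation.
\end{enumerate}
To invoke Theorem~\ref{thm:three-main-criterion}, two further checks are needed. First, \(a=r^2+r+3\), \(b=r+2\) and \(t=r\) are positive, and \(a^2>4b\) holds trivially. Second, \(F_r(H)\) is not semiregular. The \(V_+\)-class contains vertices of degree \(r^2+1\) (the images of \(W_+\)) and leaves of degree \(1\) (the leaves of the attached stars), and \(r^2+1\ne1\). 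Connectivity is also required. Since \(H\) is connected and the attachments are pendant trees, \(F_r(H)\) is connected. Theorem~\ref{thm:three-main-criterion} then gives \(F_r(H)\in\calG_k(r^2+r+3,r+2,r)\) with \(Q\)-main polynomial \(x(x^2-(r^2+r+3)x+(r+2))\).

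\emph{Step 3: unbounded diameter.} Produce skeletons \(H_m\) of arbitrarily large diameter that still meet the hypotheses of Construction~\ref{con:F}. My first choice is to lengthen the tree part. Take \(H_0\), pick a leaf \(w\in W_+\), and replace it by a pendant chain that keeps all degree constraints. Concretely, let \(u\in W_-\) be the neighbour of \(w\). Give \(w\) degree \(r^2\) by attaching \(r^2-1\) new \(W_-\)-vertices, each of degree \(r+1\). Each such new vertex receives \(r\) new \(W_+\)-leaves. Iterating this at a new leaf \(m\) times gives a path in the tree part whose length grows by \(2\) per step, so the diameter of \(H_m\) is at least about \(2m\). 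All degree conditions persist, the cyclomatic number stays \(k\), and a \(W_+\)-leaf always remains. The diameter of \(F_r(H_m)\) is at least that of \(H_m\). The reason is that \(H_m\) is an isometric subgraph: the pendant trees create no shortcuts. Graphs of distinct diameters are nonisomorphic, which gives infinitely many.

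\emph{Main obstacle.} The delicate point is ensuring that the diameter-growing modification preserves every hypothesis of Construction~\ref{con:F} simultaneously. These are the \(W_-\) degree exactly \(r+1\), the \(W_+\) degrees in \(\{1,r^2\}\), and connectivity. A fallback is to replace the ladder \(L_k\) in Lemma~\ref{lem:skeleton} by \(L_k\) with a long path attached. A path's internal vertices have degree \(2\le3\), so the same padding argument of that lemma applies verbatim and yields large diameter directly.
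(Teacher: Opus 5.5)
Your proposal is correct and follows essentially the paper's proof. You use the same skeleton from Lemma~\ref{lem:skeleton} with \(d_+=r^2\), \(d_-=r+1\), the same appeal to Lemma~\ref{lem:extension} and Theorem~\ref{thm:three-main-criterion} (with non-semiregularity from the degrees \(r^2+1\) and \(1\) in \(V_+\)), and the same grafting gadget at a \(W_+\)-leaf: a \(K_{1,r^2-1}\) whose leaves each receive \(r\) new \(W_+\)-vertices.

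The only difference is that you iterate the gadget along a single branch, which gives diameter at least \(2m+1\). The paper instead grafts the fully recursive tree \(T_m\) and uses two of its branches to get \(4m+4\). Either bound suffices, so the fallback you sketched via a padded path is unnecessary.
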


\begin{proof}
Apply Lemma~\ref{lem:skeleton} to the prescribed \(k\), with \(d_+=r^2\) and
\(d_-=r+1\).  Let \(H\), with bipartition \(W_+\sqcup W_-\), be the resulting
graph, and fix a leaf in \(W_+\).

Let \(T_0\) be the one-vertex rooted tree.  For \(m\ge1\), take a copy of
\(K_{1,r^2-1}\), with center chosen as the root.  For each leaf of this star,
take \(r\) new copies of \(T_{m-1}\), and add an edge from the leaf to the root
of each copy.  Thus, if \(n_m=|V(T_m)|\), then \(n_0=1\) and
\(n_m=r^2+r(r^2-1)n_{m-1}\) for \(m\ge1\).
Let \(H_m\) be obtained
by identifying the root of \(T_m\) with the fixed leaf of \(H\), assigning the
vertices of \(T_m\) at even distance from the root to \(W_+\) and those at odd
distance to \(W_-\).  Put \(G_m=F_r(H_m)\).
We write \(G_{r,k,m}\) for \(G_m\) when the dependence on \(r\) and \(k\) is to
be made explicit.

For every \(m\ge0\), the graph \(H_m\) has cyclomatic number \(k\), all vertices
in \(W_-\) have degree \(r+1\), and all vertices in \(W_+\) have degree \(1\) or
\(r^2\).  Lemma~\ref{lem:extension} therefore gives that \(G_m\) has
cyclomatic number \(k\) and satisfies the
\((r^2+r+3,r+2,r)\)-signed degree-sum equation.
The \(V_+\)-class contains vertices of degrees \(r^2+1\) and \(1\), so
\(G_m\) is not semiregular.  Since
\((r^2+r+3)^2>4(r+2)\), Theorem~\ref{thm:three-main-criterion} gives exactly
three \(Q\)-main eigenvalues, including the zero eigenvalue, with \(Q\)-main
polynomial
\[
        x\bigl(x^2-(r^2+r+3)x+(r+2)\bigr).
\]

For \(m\ge1\), choose vertices \(p,q\in V(T_m)\), each at distance \(2m\) from
the root, in different components of \(T_m\) after deleting the root.  Then
\(\operatorname{dist}_{H_m}(p,q)=4m\).  Construction~\ref{con:F} adds vertices
\(p'\) and \(q'\) at distance \(2\) from \(p\) and \(q\), respectively, and its
attached trees meet \(H_m\) only at their attachment vertices.  Hence
\(\operatorname{diam}(G_m)\ge4m+4\), proving the asserted unboundedness and
therefore the infinitude up to isomorphism.
\end{proof}

The cases \(k=0, 1, 2\) with \(r=2\) and \(m=0\) are displayed in
Figure~\ref{fig:r2-m0-examples}.

\begin{figure}
\centering
\includegraphics[width=\textwidth]{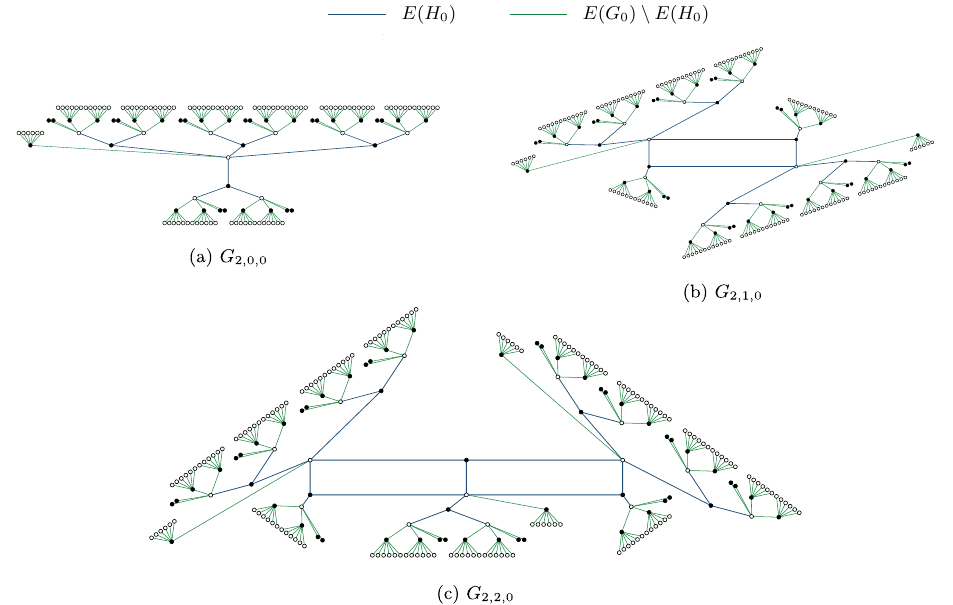}
\caption{Examples for \(r=2\), \(m=0\), and \(k=0,1,2\).}
\label{fig:r2-m0-examples}
\end{figure}

\subsection*{Consequences for previous classifications}

For graphs with exactly three \(Q\)-main eigenvalues, one of which is zero,
the earlier classifications may be summarized as follows.  For trees,
\cite[Theorem~4.3]{JavarsinehFathTabar2017Filomat} asserts that every such
graph is either obtained from a path on five vertices by attaching the same
number of additional leaves to the two vertices adjacent to its endvertices,
or is obtained from a path on seven vertices by attaching one additional leaf
to each of those two vertices; see
Figure~\ref{fig:claimed-tree-classification}.

For unicyclic graphs, \cite[Theorem~3.1]{JavarsinehFathTabar2017AMC} asserts
that every such graph is obtained from an even cycle by attaching the same
positive number of leaves to alternate cycle vertices.  For bicyclic graphs,
\cite[Theorems~4.1 and~4.3]{JavarsinehFathTabar2017AMC} asserts that the only
pendant-free examples are two \(4\)-cycles sharing one vertex, two disjoint
\(4\)-cycles joined by a path of length \(4\), and three internally disjoint
paths of length \(4\) with common endpoints, and that no example has a
nonempty pendant part; see Figure~\ref{fig:claimed-amc-classification}.

Theorem~\ref{thm:infinite-family} contradicts all three assertions.  For
\(k=0\), it gives trees of arbitrarily large diameter, whereas the claimed
trees have diameter four or six.  For \(k=1\), it gives vertices arbitrarily
far from the unique cycle, whereas every vertex off the cycle in the claimed
family is pendant.  For \(k=2\), it gives bicyclic graphs with pendant
vertices, whereas the claimed list contains only the three pendant-free graphs
above.  Hence the three cited classification conclusions are not exhaustive.

\begin{figure}[!ht]
\centering
\vspace{4pt}
\begin{tikzpicture}[
    scale=.82,
    vertex/.style={circle,draw,fill=black,inner sep=1.5pt},
    every node/.style={font=\scriptsize}]
  \begin{scope}[xshift=-4.2cm]
    \foreach \x in {-2,-1,0,1,2}
      \node[vertex] (t\x) at (\x,0) {};
    \draw (t-2)--(t-1)--(t0)--(t1)--(t2);
    \node[vertex] (t1l) at (-1.65,-.82) {};
    \node[vertex] (t1r) at (-.35,-.82) {};
    \draw (t-1)--(t1l) (t-1)--(t1r);
    \node at (-1,-.72) {$\cdots$};
    \node[vertex] (t3l) at (.35,-.82) {};
    \node[vertex] (t3r) at (1.65,-.82) {};
    \draw (t1)--(t3l) (t1)--(t3r);
    \node at (1,-.72) {$\cdots$};
    \node at (0,-1.35) {(a)};
  \end{scope}
  \begin{scope}[xshift=4.1cm]
    \foreach \x in {-3,-2,-1,0,1,2,3}
      \node[vertex] (s\x) at (\x,0) {};
    \draw (s-3)--(s-2)--(s-1)--(s0)--(s1)--(s2)--(s3);
    \node[vertex] (sl) at (-2,-.82) {};
    \node[vertex] (sr) at (2,-.82) {};
    \draw (s-2)--(sl) (s2)--(sr);
    \node at (0,-1.35) {(b)};
  \end{scope}
\end{tikzpicture}
\caption{The trees listed in
\protect\cite[Theorem~4.3]{JavarsinehFathTabar2017Filomat}.}
\label{fig:claimed-tree-classification}
\end{figure}

\begin{figure}[!ht]
\centering
\begin{tikzpicture}[
    scale=.82,
    vertex/.style={circle,draw,fill=black,inner sep=1.5pt},
    every node/.style={font=\scriptsize}]
  \node[vertex] (cl)  at (-2.6,0) {};
  \node[vertex] (cul) at (-2.0,.9) {};
  \node[vertex] (cta) at (-1.1,.9) {};
  \node[vertex] (ctb) at (1.1,.9) {};
  \node[vertex] (cur) at (2.0,.9) {};
  \node[vertex] (cr)  at (2.6,0) {};
  \node[vertex] (clr) at (2.0,-.9) {};
  \node[vertex] (cba) at (1.1,-.9) {};
  \node[vertex] (cbb) at (-1.1,-.9) {};
  \node[vertex] (cll) at (-2.0,-.9) {};
  \draw (cl)--(cul)--(cta)--(-.32,.9);
  \draw (.32,.9)--(ctb)--(cur)--(cr)--(clr)--(cba)--(.32,-.9);
  \draw (-.32,-.9)--(cbb)--(cll)--(cl);
  \node[inner sep=0pt] at (0,.9) {$\scriptscriptstyle\cdots$};
  \node[inner sep=0pt] at (0,-.9) {$\scriptscriptstyle\cdots$};

  \foreach \x/\p in {-1.1/cta,1.1/ctb}{
    \node[vertex] (\p1) at ({\x-.38},1.78) {};
    \node[vertex] (\p2) at ({\x+.38},1.78) {};
    \draw (\p)--(\p1) (\p)--(\p2);
    \node[inner sep=0pt] at (\x,1.78) {$\cdots$};
  }
  \foreach \x/\p in {-1.1/cbb,1.1/cba}{
    \node[vertex] (\p1) at ({\x-.38},-1.78) {};
    \node[vertex] (\p2) at ({\x+.38},-1.78) {};
    \draw (\p)--(\p1) (\p)--(\p2);
    \node[inner sep=0pt] at (\x,-1.78) {$\cdots$};
  }
  \node[vertex] (cl1) at (-3.35,.38) {};
  \node[vertex] (cl2) at (-3.35,-.38) {};
  \draw (cl)--(cl1) (cl)--(cl2);
  \node[inner sep=0pt] at (-3.35,0) {$\vdots$};
  \node[vertex] (cr1) at (3.35,.38) {};
  \node[vertex] (cr2) at (3.35,-.38) {};
  \draw (cr)--(cr1) (cr)--(cr2);
  \node[inner sep=0pt] at (3.35,0) {$\vdots$};
  \node at (0,-2.18) {(a)};
\end{tikzpicture}

\smallskip

\begin{tikzpicture}[
    scale=.74,
    vertex/.style={circle,draw,fill=black,inner sep=1.5pt},
    every node/.style={font=\scriptsize}]
  \begin{scope}[xshift=-6.4cm]
    \node[vertex] (bc) at (0,0) {};
    \node[vertex] (blt) at (-1,.8) {};
    \node[vertex] (blm) at (-2,0) {};
    \node[vertex] (blb) at (-1,-.8) {};
    \node[vertex] (brt) at (1,.8) {};
    \node[vertex] (brm) at (2,0) {};
    \node[vertex] (brb) at (1,-.8) {};
    \draw (bc)--(blt)--(blm)--(blb)--(bc)--(brt)--(brm)--(brb)--(bc);
    \node at (0,-1.35) {(b)};
  \end{scope}
  \begin{scope}
    \node[vertex] (du) at (-2,0) {};
    \node[vertex] (dlt) at (-2.75,.75) {};
    \node[vertex] (dlm) at (-3.5,0) {};
    \node[vertex] (dlb) at (-2.75,-.75) {};
    \node[vertex] (dw1) at (-1,0) {};
    \node[vertex] (dw2) at (0,0) {};
    \node[vertex] (dw3) at (1,0) {};
    \node[vertex] (dz) at (2,0) {};
    \node[vertex] (drt) at (2.75,.75) {};
    \node[vertex] (drm) at (3.5,0) {};
    \node[vertex] (drb) at (2.75,-.75) {};
    \draw (du)--(dlt)--(dlm)--(dlb)--(du)--(dw1)--(dw2)--(dw3)--(dz)
      --(drt)--(drm)--(drb)--(dz);
    \node at (0,-1.35) {(c)};
  \end{scope}
  \begin{scope}[xshift=6.4cm]
    \node[vertex] (qu) at (-2,0) {};
    \node[vertex] (qz) at (2,0) {};
    \foreach \y/\s in {.8/t,0/m,-.8/b}{
      \node[vertex] (\s1) at (-1,\y) {};
      \node[vertex] (\s2) at (0,\y) {};
      \node[vertex] (\s3) at (1,\y) {};
      \draw (qu)--(\s1)--(\s2)--(\s3)--(qz);
    }
    \node at (0,-1.35) {(d)};
  \end{scope}
\end{tikzpicture}
\caption{The unicyclic and bicyclic graphs listed in
\protect\cite[Theorems~3.1 and~4.1]{JavarsinehFathTabar2017AMC}.}
\label{fig:claimed-amc-classification}
\end{figure}

\FloatBarrier

\section{Further directions}\label{sec:open-problems}

The first remaining issues are already visible for exactly three \(Q\)-main
eigenvalues.  The calculation at the beginning of
Section~\ref{sec:families} singles out the natural triples
\[
        (t^2+t+q+2,t+2q,t) \qquad (q\ge1),
\]
obtained from \(b=t+2q\).  Theorem~\ref{thm:infinite-family} realizes the
first layer \(q=1\), \(t\ge2\), with arbitrary prescribed cyclomatic number;
the boundary line \(t=1\) is realized for \(q=1\) and for all \(q\ge3\) by the
constructions above.  The remaining questions therefore begin with the case
\(t=1\), \(q=2\), and with higher layers for \(t\ge2\).

For more than three \(Q\)-main eigenvalues,
Theorem~\ref{thm:zero-main-general-criterion} still applies, but the equation
\(p(Q)\one=(-1)^{\ell-1}t\sgn\) involves \(Q^j\one\) with \(j\ge3\).  Already
for \(\ell=4\), if \(p(x)=x^3-ax^2+bx-c\), the vector \(Q^3\one\) has coordinate
\[
        (Q^3\one)_v
        =2d(v)^3+2d(v)s(v)+2\sum_{u\sim v}\bigl(d(u)^2+s(u)\bigr),
\]
so the resulting condition is no longer determined only by the degree and
2-degree of \(v\).  This suggests the following problems.

\begin{enumerate}[label=(\arabic*),leftmargin=*]
\item \textbf{Higher layers for \(\ell=3\).}  Determine whether the triples
\((t^2+t+q+2,t+2q,t)\) with \(t\ge2\) and \(q\ge2\) are realizable.  If so, can
the cyclomatic number be prescribed?  The exceptional boundary case
\((a,b,t)=(6,5,1)\) also remains to be understood.

\item \textbf{Higher-degree main polynomials.}  For \(\ell\ge4\), do there
exist infinite families with prescribed cyclomatic number and exactly \(\ell\)
\(Q\)-main eigenvalues, including the zero eigenvalue?  Can one design
constructions that enforce \(p(Q)\one=(-1)^{\ell-1}t\sgn\) for prescribed
polynomials \(p\) of degree \(\ell-1\)?

\item \textbf{Small cyclomatic number.}  Give complete classifications of the
trees, unicyclic graphs, and bicyclic graphs with exactly three \(Q\)-main
eigenvalues, including the zero eigenvalue.

\item \textbf{The zero-free case.}  Graphs with exactly three \(Q\)-main
eigenvalues, none of which is zero, are not covered by the bipartite reduction
used here.  Is there an analogous vertexwise criterion in this case?
\end{enumerate}

\section*{Data Availability Statement}
\phantomsection
\label{sec:data-availability}
The SageMath verification notebook, construction scripts, and sample output
files used in this paper are publicly available in a
\href{https://github.com/ditf015/3-Q-main-eigenvalues-ce}{GitHub repository}.
The corresponding archived release, version v1.1.0, is available on Zenodo:
\url{https://doi.org/10.5281/zenodo.21432139}.

\bibliographystyle{elsarticle-num}
\bibliography{v1_refs}

@article{Cvetkovic2007,
  author  = {Cvetkovi{\'c}, Drago{\v{s}} and Rowlinson, Peter and Simi{\'c}, Slobodan K.},
  title   = {Signless {Laplacians} of finite graphs},
  journal = {Linear Algebra and its Applications},
  volume  = {423},
  number  = {1},
  year    = {2007},
  pages   = {155--171},
  doi     = {10.1016/j.laa.2007.01.009}
}

@book{CvetkovicBook,
  author    = {Cvetkovi{\'c}, Drago{\v{s}} and Rowlinson, Peter and Simi{\'c}, Slobodan},
  title     = {An Introduction to the Theory of Graph Spectra},
  series    = {London Mathematical Society Student Texts},
  volume    = {75},
  publisher = {Cambridge University Press},
  address   = {Cambridge},
  year      = {2010},
  doi       = {10.1017/CBO9780511801518}
}

@article{DengHuang2013,
  author  = {Deng, Hanyuan and Huang, He},
  title   = {On the main signless {Laplacian} eigenvalues of a graph},
  journal = {Electronic Journal of Linear Algebra},
  volume  = {26},
  year    = {2013},
  pages   = {381--393},
  doi     = {10.13001/1081-3810.1659}
}

@article{FengYu2009,
  author  = {Feng, Lihua and Yu, Guihai},
  title   = {On three conjectures involving the signless {Laplacian} spectral radius of graphs},
  journal = {Publ. Inst. Math. (Beograd) (N.S.)},
  volume  = {85},
  number  = {99},
  year    = {2009},
  pages   = {35--38},
  doi     = {10.2298/PIM0999035F}
}

@article{ChenHuang2013,
  author  = {Chen, Lin and Huang, Qiongxiang},
  title   = {Trees, unicyclic graphs and bicyclic graphs with exactly two {$Q$}-main eigenvalues},
  journal = {Acta Mathematica Sinica, English Series},
  volume  = {29},
  number  = {11},
  year    = {2013},
  pages   = {2193--2208},
  doi     = {10.1007/s10114-013-1629-y}
}

@misc{LiYang2013Tricyclic,
  author        = {Li, Shuchao and Yang, Xue},
  title         = {Characterization of tricyclic graphs with exactly two {$Q$}-main eigenvalues},
  note          = {arXiv:1304.3524},
  year          = {2013},
  eprint        = {1304.3524},
  archivePrefix = {arXiv},
  primaryClass  = {math.CO},
  doi           = {10.48550/arXiv.1304.3524}
}

@article{JavarsinehFathTabar2017Filomat,
  author  = {Javarsineh, Mehrnoosh and Fath-Tabar, Gholam Hossein},
  title   = {On graphs with exactly three {$Q$}-main eigenvalues},
  journal = {Filomat},
  volume  = {31},
  number  = {6},
  year    = {2017},
  pages   = {1803--1812},
  doi     = {10.2298/FIL1706803J}
}

@article{JavarsinehFathTabar2017AMC,
  author  = {Javarsineh, Mehrnoosh and Fath-Tabar, Gholam Hossein},
  title   = {Unicyclic and bicyclic graphs with exactly three {$Q$}-main eigenvalues},
  journal = {Applied Mathematics and Computation},
  volume  = {315},
  year    = {2017},
  pages   = {603--614},
  doi     = {10.1016/j.amc.2017.06.033}
}

@article{Rowlinson2007,
  author  = {Rowlinson, Peter},
  title   = {The main eigenvalues of a graph: A survey},
  journal = {Applicable Analysis and Discrete Mathematics},
  volume  = {1},
  number  = {2},
  year    = {2007},
  pages   = {445--471},
  doi     = {10.2298/AADM0702445R}
}

@article{Cvetkovic1970,
  author  = {Cvetkovi{\'c}, Drago{\v{s}} M.},
  title   = {The generating function for variations with restrictions and paths of the graph and self-complementary graphs},
  journal = {Publikacije Elektrotehni{\v{c}}kog Fakulteta. Serija Matematika i Fizika},
  number  = {320/328},
  year    = {1970},
  pages   = {27--34}
}

@article{CvetkovicSimic2009,
  author  = {Cvetkovi{\'c}, Drago{\v{s}} and Simi{\'c}, Slobodan K.},
  title   = {Towards a spectral theory of graphs based on the signless {Laplacian}, {I}},
  journal = {Publ. Inst. Math. (Beograd) (N.S.)},
  volume  = {85},
  number  = {99},
  year    = {2009},
  pages   = {19--33},
  doi     = {10.2298/PIM0999019C}
}

@article{Hagos2002,
  author  = {Hagos, Elias M.},
  title   = {Some results on graph spectra},
  journal = {Linear Algebra and its Applications},
  volume  = {356},
  year    = {2002},
  pages   = {103--111},
  doi     = {10.1016/S0024-3795(02)00324-5}
}

@article{VinagreTrevisanBolckauChimelli2020,
  author  = {Vinagre, Cybele T. M. and Trevisan, Vilmar and Bolckau, Johann and Chimelli, Rodrigo},
  title   = {Characterizing threshold graphs with {$k$} main signless {Laplacian} eigenvalues},
  journal = {Linear Algebra and its Applications},
  volume  = {602},
  year    = {2020},
  pages   = {33--45},
  doi     = {10.1016/j.laa.2020.04.022}
}

@article{JonesTrevisanVinagre2025,
  author  = {Jones, {\'A}tila and Trevisan, Vilmar and Vinagre, Cybele T. M.},
  title   = {Characterization of quasi-threshold graphs with two main {$Q$}-eigenvalues},
  journal = {Linear Algebra and its Applications},
  volume  = {711},
  year    = {2025},
  pages   = {68--83},
  doi     = {10.1016/j.laa.2025.02.009}
}

@book{Bapat2010,
  author    = {Bapat, Ravindra B.},
  title     = {Graphs and Matrices},
  series    = {Universitext},
  publisher = {Springer},
  address   = {London},
  year      = {2010},
  doi       = {10.1007/978-1-84882-981-7}
}

@article{GutknechtSchmelzer2009,
  author  = {Gutknecht, Martin H. and Schmelzer, Thomas},
  title   = {The block grade of a block {Krylov} space},
  journal = {Linear Algebra and its Applications},
  volume  = {430},
  number  = {1},
  year    = {2009},
  pages   = {174--185},
  doi     = {10.1016/j.laa.2008.07.008}
}

\end{document}